\newtheorem{theorem}{Theorem}[section]
\newtheorem{lemma}[theorem]{Lemma}
\newtheorem{remark}[theorem]{Remark}
\newtheorem{definition}[theorem]{Definition}
\newtheorem{corollary}[theorem]{Corollary}
\newtheorem{proposition}[theorem]{Proposition}
\newtheorem{lem-def}[theorem]{Lemma-Definition}
\newcommand{\R}{\mathbb R}
\newcommand{\Z}{\mathbb Z}
\newcommand{\Q}{\mathbb Q}
\newcommand{\F}{\mathbb F}
\def\op{\operatorname}
\def\as#1{\renewcommand\arraystretch{#1}}
\def\bb{{\mathcal B}}
\def\dsc{\operatorname{Disc}}
\def\diso{\lower.4ex\hbox{$\downarrow$}\raise.4ex\hbox{\mbox{\scriptsize
$\wr$}}}
\def\gen#1{\big\langle\, {#1} \,\big\rangle}
\def\gl#1#2{\op{GL}_{#1}(#2)}
\def\ind{\op{ind}}
\def\iso{\ \lower.3ex\hbox{\as{.08}$\begin{array}{c}\lra\\\mbox{\tiny $\sim\,$}\end{array}$}\ }
\def\lg{l\raise.6ex\hbox to.2em{\hss.\hss}l}
\def\ll{\mathcal{L}}
\def\lra{\longrightarrow}
\def\m{{\mathfrak m}}
\def\oo{\mathcal{O}}
\def\orb{\hbox to  .3em{$\backslash$}\backslash}
\def\res{\operatorname{Res}}
\def\dd{\mathcal{D}}
\def\v#1{\Vert#1\Vert}
\def\pp{\mathbb{P}}
\def\bb{{\mathcal B}}
\def\dd{{\mathcal D}}
\def\ii{\mathcal{I}}
\def\cf{{C_f}}
\def\ind{\mathrm{ind}}
\def\indi{\mathrm{ind}_\infty}
\newcounter{cs}
\newcommand{\casos}{\begin{itemize}}
\newcommand{\fcasos}{\end{itemize}\setcounter{cs}{1}}
\newfont{\tit}{cmr12 scaled \magstep3}
\title[Computation of the genus]{Genus Computation of global function fields}
\author[Bauch]{Jens-Dietrich Bauch}
\address{Departament de Matem\`{a}tiques,
         Universitat Aut\`{o}noma de Barcelona,
         Edifici C, E-08193 Bellaterra, Barcelona, Catalonia, Spain}
\email{bauch@mat.uab.cat}
\thanks{Supported by  MTM2009-10359 from the
Spanish MEC}
\date{}
\keywords{Riemann-Roch, function field, genus, Montes algorithm}
\subjclass[2010]{ }
\begin{document}

\begin{abstract}
In this paper we present an algorithm that computes the genus of a global function field. Let $F/k$ be function field over a field $k$, and let $k_0$ be the full constant field of $F/k$. By using lattices over subrings of $F$, we can express the genus $g$ of $F$ in terms of $[k_0:k]$ and the indices of certain orders of the finite and infinite maximal orders of $F$. If $k$ is a finite field, the Montes algorithm computes the latter indices as a by-product. This leads us to a fast computation of the genus of global function fields. Our algorithm does not require the computation of any basis, neither of finite nor infinite maximal order.

\end{abstract}

\maketitle
\section*{Introduction}
Let $F/k$ be a function field of one variable over $k$ and denote $\dd_F$ the set of divisors of $F$. The computation of the non-negative integer 
$$
g:=\max\{\deg_k A-\dim_k A+1\mid A\in \dd_F\}
$$
is one of the fundamental tasks in algebraic function field theory or the theory of algebraic curves. To this day, the fastest algorithms that compute the genus $g$ of $F$ are based on the computation of certain Riemann-Roch spaces \cite{F.H.}. For this purpose the computation of bases of the finite and infinite maximal orders of $F$ is necessary. If $k$ is equal to the full constant field $k_0$ of $F$, we present in this paper a direct way to determine $g$. For instance, no basis computation will be required at all. \\
Our algorithm is based on the repeated application of the Montes algorithm. Hence, it has an excellent practical performance for global function fields; that is, when $k$ is a finite field. According to our tests, the running time of the genus computation is in most of the cases dominated by the computation and factorization of the discriminant of a defining polynomial of $F$. The complexity estimation for the Montes algorithm in \cite{BNS} affords us concrete bounds for the number of operations in the finite constant field $k$, which are needed to compute the genus of $F$ (Theorem \ref{complexi}). Unfortunately, these theoretical bounds do not fit well with the practical performance of the method. 

\section{Algebraic function fields}
Throughout this paper $F/k$ will denote an algebraic function field of one variable over the field $k$. That is, $F/k(t)$ is a separable extension of finite degree $n$, for $t\in F$ transcendental over $k$. We denote $A:=k[t]$, $K:=k(t)\subset F$. Let $v_\infty:K\rightarrow\Z\cup\{\infty\} $ be the discrete valuation determined by 
$$v_\infty(h/g):=\deg g-\deg h$$
for $h,g\in A$. Let $A_\infty=k[t^{-1}]_{(t^{-1})}\subset K$ be the valuation ring of $v_\infty$, $\m_\infty$ its maximal ideal and $U_\infty:=\{a\in K\mid v_\infty(a)=0\}$, the group of units of $A_\infty$.
Denote by $\mathbb{P}_F$ the set of all places of $F/k$ and let $\mathbb{P}_\infty\subset\mathbb{P}_F$ be the set of all places over $\infty$. We set $\mathbb{P}_0:=\mathbb{P}_F\setminus \mathbb{P}_\infty$. Every place $P\in\pp_F$ corresponds to a surjective valuation $v_P:F\rightarrow \Z\cup\{\infty\}$, which is zero on  $k$. A divisor $D$ of $F/k$ is a formal $\Z$-linear combination of the places of $F$. For a divisor $D=\sum_{P\in\pp_F}a_P\cdot P$, we set $v_P(D):=a_P$ and define the degree of $D$ (over $k$) by 
$$\deg_k D:=\sum_{P\in\pp_F}a_P\cdot\deg_k P.$$
For $z\in F^\times$ we define the principal divisor generated by $z$ by $(z):=\sum_{P\in\pp_F}v_P(z)\cdot P$. Denote by $Z_z:=\{P\in\pp_F\mid v_P(z)> 0 \}$ and $N_z:=\{P\in\pp_F\mid v_P(z)<0 \}$ the sets of zeros and poles of $z$, respectively. We call $(z)_0:=\sum_{P\in Z_z}v_P(z)\cdot P$ the zero divisor of $z$ and $(z)_\infty:=\sum_{P \in N_z}-v_P(z)\cdot P$ the pole divisor of $z$. The Riemann-Roch space of a divisor $D$ of $F$ is the finite dimensional $k$-vector space
$$\ll(D):=\{a\in F^\times\mid (a)\geq -D\}\cup \{0\}.$$ 
Instead of $\dim_k \ll(D)$, we write $\dim_k D$ for any divisor $D$ of $F$. Then, we may define the genus $g$ of $F$ as in the introduction.\\
Let $\oo_F:=\mathrm{Cl}(A,F)$ and $\oo_{F,\infty}:=\mathrm{Cl}(A_\infty,F)$ be the integral closures of $A$ and $A_\infty$ in $F$, respectively.\\ 
We realize an algebraic function field $F/k$ as the quotient field of the residue class ring $A[x]/f(t,x)A[x]$, where 
$$f(t,x)=x^n+a_1(t)x^{n-1}+\dots+a_n(t)\in A[x]$$ 
is irreducible, monic and separable in $x$. A polynomial $f$ satisfying these conditions is called a defining polynomial of $F/k$. Such a representation exists for every algebraic function field over a perfect constant field \cite[p. 128]{H.Stich}. We consider $\theta\in F$ with $f(t,\theta)=0$, so that $F$ can be expressed as $k(t,\theta)$. We call $A[\theta]$ the \emph{finite equation order} of $f$, and we define 
$$C_f:=\max\{\lceil\deg a_i(t)/i\rceil \mid 1\leq i\leq n\},\quad f_\infty(1/t,x):=t^{-nC_f}f(t,t^\cf x).$$ 
Thus, $f_\infty$ belongs to $k[1/t,x]\subset A_\infty[x]$ and $F$ can be represented as the quotient field of $A_\infty[x]/f_\infty(1/t,x) A_\infty[x]$. As $\theta_\infty:=\theta/t^\cf$ is integral over $A_\infty$, we may consider the \emph{infinite equation order} $A_\infty[\theta_\infty]$.

\begin{definition}
Let $M$ and $M'$ be two $A$- or $A_\infty$-modules of rank $n$ with bases $\{b_1,\dots,b_n\}$ and $\{b'_1,\dots,b'_n\}$, respectively. We set the index $[M:M']$ to be the class of $\det(T)$ in $K^*/k^*$, where $T\in\gl{n}{K}$ is a transition matrix.
\end{definition}

Note that this definition is independent of the choice of the bases of $M$ and $M'$. The values $v_\infty([M:M'])$ and $v_{p(t)}([M:M'])$, for an irreducible polynomial $p(t)\in A$, are defined as the valuation of  any representative of the class $[M:M']$.

Let $k$ be a finite field with $q$ elements. Our algorithm of the computation of the genus of a function field $F/k$ strongly depends on the Montes algorithm \cite{HN2}, \cite{HN}:\medskip

\noindent{\tt Montes-algorithm($f(t,x)$,\,$p(t)$)}\vskip 1mm

\noindent INPUT:

\noindent $-$ Defining polynomial $f(t,x)$ of a global function field $F/k$.

\noindent $-$ An irreducible polynomial $p(t)\in k[t]$.\medskip

\noindent OUTPUT:

\noindent $-$ Non-negative integer $\ind_{p(t)}:=v_{p(t)}([\oo_F:A[\theta]])$.\bigskip

The original version of the Montes algorithm produces a more comprehensive output, but we require only $\ind_{p(t)}$. Admitting fast multiplication, it is shown in \cite[Theorem 5.14]{BNS} that the Montes algorithm needs 
$$O\left((\deg p(t))^{1+\epsilon}(n^{2+\epsilon}+n^{1+\epsilon}\delta_{p(t)}\log(q^{\deg p(t)})+n^{1+\epsilon}\delta_{p(t)}^{2+\epsilon})\right)$$ 
operations in $k$ to terminate, where $n:=\deg f$ and $\delta_{p(t)}:=v_{p(t)}(\dsc(f))$. Also, if we set 
$$
\delta_\infty:=v_\infty(\dsc(f_\infty)),\quad\indi:=v_\infty([\oo_{F,\infty}:A_\infty[\theta_\infty]]),
$$
the routine \noindent{\tt Montes-algorithm($f_\infty(1/t,x)$,\,$1/t$)} determines the non-negative integer $\indi$, at a cost of $O(n^{2+\epsilon}+n^{1+\epsilon}\delta_{\infty}\log(q)+n^{1+\epsilon}\delta_{\infty}^{2+\epsilon})$ operations in $k$. 

\section{Lattices over $k(t)$}
Our aim is to describe the genus $g$ of a function field $F/k$ in terms of the indices $[\oo_F:A[\theta]]$, $[\oo_{F,\infty}:A_\infty[\theta_\infty]]$ and $[k_0:k]$. To this end, we use the language of lattices and their reduced bases. 
A more comprehensive consideration can be found in \cite{Len2} and \cite{Icke}.
\subsection{Lattices and normed spaces}

On $K=k(t)$ we consider the degree function $|~|:K\rightarrow  \{-\infty\}\cup \Z$, determined by $|x|:=-v_\infty(x)$ . Let $K_\infty=k((t^{-1}))$ be the completion of $K$ at the place $\infty$. The valuation $v_\infty$ extends in an obvious way to $\overline{K}_\infty$, and it determines a degree function on $\overline{K}_\infty$ as above: $|~|:=-v_\infty$. 
\begin{definition}\label{norm} Let $R$ be a subring of $K_\infty$, and let $X$ be a finitely generated $R$-module. A \emph{norm}, or  \emph{length function} on $X$ is a mapping
$$
\v{~}\colon X\lra \{-\infty\}\cup\R
$$ 
satisfying the following conditions:
\begin{enumerate}
\item $\Vert x+y\Vert\le \max\{\v{x},\Vert y\Vert\}$, for all $x,y\in X$, \item $\Vert ax\Vert=|a|+\v{x}$,
for all $a\in R$, $x\in X$,
\item $\v{x}=-\infty$ if and only if $x=0$,
\item $\dim_k\{x\in X\mid \v{x}\le r\}<\infty$, for each $r\in\R$. 
\end{enumerate}
\end{definition}

\noindent Clearly, $|~|:R\rightarrow \{-\infty\}\cup \R$, with $R\in \{K,K_\infty\}$, is a norm on $R$.
\begin{lemma}\label{ineq}
Let $R$ be a subring of $K_\infty$, $X$ a finitely generated $R$-module, and $\v{~}$ a norm on $X$. Then, for any $x_1,x_2\in X$ with $\v{x_1}\neq \v{x_2}$, it holds
$$
\v{x_1+x_2}=\max\{\v{x_1},\v{x_2}\}.
$$
\end{lemma}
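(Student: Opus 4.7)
The plan is the standard ultrametric argument: derive a contradiction from assuming the strict inequality $\|x_1+x_2\|<\max\{\|x_1\|,\|x_2\|\}$, using only conditions (1) and (2) of Definition \ref{norm}.

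First I would normalize: without loss of generality assume $\|x_1\|>\|x_2\|$, so that the claim reduces to showing $\|x_1+x_2\|=\|x_1\|$. The easy direction is immediate from condition (1), which gives
$$\|x_1+x_2\|\le\max\{\|x_1\|,\|x_2\|\}=\|x_1\|.$$

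The reverse inequality is where I would be slightly careful. I would first record the auxiliary fact that $\|-y\|=\|y\|$ for every $y\in X$: since $-1\in R$ (because $R$ is a subring of $K_\infty$, hence contains $1$ and $-1$) and $|-1|=-v_\infty(-1)=0$, condition (2) gives $\|-y\|=|-1|+\|y\|=\|y\|$. Then writing $x_1=(x_1+x_2)+(-x_2)$ and applying condition (1) again yields
$$\|x_1\|\le\max\{\|x_1+x_2\|,\|-x_2\|\}=\max\{\|x_1+x_2\|,\|x_2\|\}.$$
Since by hypothesis $\|x_1\|>\|x_2\|$, the maximum on the right cannot be $\|x_2\|$, forcing $\|x_1+x_2\|\ge\|x_1\|$. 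Combining both inequalities gives the claim.

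There is no real obstacle; the only step that deserves a moment's attention is the observation $\|-x_2\|=\|x_2\|$, which is what allows one to convert the subadditivity (1) into a two-sided equality. Conditions (3) and (4) of Definition \ref{norm} play no role in this lemma.
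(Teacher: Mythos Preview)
Your proof is correct and follows essentially the same route as the paper's: both reduce to $\|x_1\|>\|x_2\|$, and both derive the reverse inequality from $\|x_1\|=\|(x_1+x_2)+(-x_2)\|\le\max\{\|x_1+x_2\|,\|x_2\|\}$. The only difference is cosmetic: the paper phrases this as a proof by contradiction and leaves the identity $\|-x_2\|=\|x_2\|$ implicit, whereas you argue directly and justify that identity via condition~(2).
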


\begin{proof}
Since $\v{x_1}\neq \v{x_2}$, we can assume $\v{x_1}>\v{x_2}$. Suppose that $\v{x_1+x_2}<\max\{\v{x_1},\v{x_2}\}=\v{x_1}$. We obtain 
$\v{x_1}=\v{(x_1+x_2)-x_2}\leq\max\{\v{x_1+x_2},\v{x_2}\}<\v{x_1},$
a contradiction.
\end{proof}

\begin{definition} A \emph{normed space over $K$} is a pair $(E,\v{~})$, where $E$
is a finite dimensional $K$-vector space and $\v{~}$ is a \emph{norm} on $E$.

A \emph{lattice over $A$} is a pair $(L,\v{~})$, where $L$ is a finitely generated
$A$-module, and $\v{~}$ is a \emph{norm} on $L$.

\end{definition}

Clearly, if $(L,\v{~})$ is a lattice, then $L\otimes_AK$ is a normed space, with
the norm function obtained by extending $\v{~}$ in an obvious way. The second property of
the norm function shows that $L$ has no $A$-torsion, so that $L$ is a free
$A$-module and it is embedded into the normed space $L\otimes_AK$. Conversely,
if $(E,\v{~})$ is a normed space, then any $A$-submodule of full rank is a
lattice with the norm function obtained by restricting $\v{~}$ to $L$.

\bigskip

\noindent{\bf Example 1. }The pair $(A,|~|)$ is a lattice in the normed space $(K,|~|)$, where $|~|$ is the ordinary degree function.
\medskip

\noindent{\bf Example 2. }Let $F/k$ be an algebraic function field. For each place $P$ of $F$ above the place $\infty$ of $K$, let $e(P|\infty)$ be the ramification index of $P$ over $\infty$. Define $w_P:=e(P|\infty)^{-1}v_P$ and:
$$w_\infty\colon F\lra \R\cup\{\infty\},\quad w_\infty(x):=\min_{P\mid\infty}\{w_P(x)\}. 
$$
Then, $(F,-w_\infty)$ is a normed space.
Actually, this is the normed space we are mostly interested in. 

\subsection{Reduced bases}
We fix throughout this section a normed space $(E,\v{~})$ over $K$, of dimension $n$. By a basis of $E$ we mean a $K$-basis. By a basis of a lattice $L\subset E$ we mean an $A$-basis.
Any basis of $L$ is in particular a basis of $E$. Conversely,
any basis $\bb=\{b_1,\dots,b_n\}$ of $E$, is a basis of the lattice $L:=\gen{\bb}_A$, the $A$-submodule generated by $\bb$. 

\begin{definition}
Let $\bb=\{b_1,\dots,b_m\}$ be a $K$-linearly independent family of $E$. We say that $\bb$ is \emph{reduced} if any of the following two equivalent conditions are satisfied:
\begin{enumerate}
\item $\v{a_1b_1+\cdots +a_mb_m}=\max\limits_{1\leq i\leq m}\{\v{a_ib_i}\}$, for all $a_1,\dots,a_m\in K$. 
\item $\v{a_1b_1+\cdots +a_mb_m}=\max\limits_{1\leq i\leq m}\{\v{a_ib_i}\}$, for all $a_1,\dots,a_m\in A$. 
\end{enumerate}
\end{definition}

\begin{theorem}\label{exist}
Every lattice admits a reduced basis.
\end{theorem}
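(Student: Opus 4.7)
The plan is a descent argument: starting from any basis, one keeps decreasing the sorted tuple of norms of basis elements until the basis becomes reduced, and the process must terminate because the value set of the norm on $L\setminus\{0\}$ is well-ordered.

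First I would show that $S := \{\|x\| : x \in L \setminus \{0\}\}$ is a discrete, bounded-below subset of $\mathbb{R}$. Property (4) of Definition \ref{norm} says each $\{x\in L : \|x\| \leq r\}$ is a finite-dimensional $k$-vector space; inside any such space one can, by Lemma \ref{ineq}, choose a $k$-basis whose norms are pairwise distinct and conclude that only finitely many values of $S$ are $\leq r$. Hence $n$-tuples in $S$ arranged in nondecreasing order are well-ordered lexicographically, so an $A$-basis $\mathcal{B} = \{b_1, \dots, b_n\}$ of $L$ with $\|b_1\| \leq \cdots \leq \|b_n\|$ minimizing its sorted norm-tuple exists.

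Next I claim such a $\mathcal{B}$ is reduced. Otherwise there exist $a_1, \dots, a_n \in A$, not all zero, with $\|\sum a_i b_i\| < M := \max_i \|a_i b_i\|$. Set $I := \{i : a_i \neq 0,\ \|a_i b_i\| = M\}$ and $j := \max I$, so that $\|b_j\| = \max_{i \in I} \|b_i\|$. By Euclidean division in $A = k[t]$, for each $i \neq j$ write $a_i = q_i a_j + r_i$ with $|r_i| < |a_j|$, and define $b'_j := b_j + \sum_{i \neq j} q_i b_i$. The change of basis from $\mathcal{B}$ to $\{b_1, \dots, b_{j-1}, b'_j, b_{j+1}, \dots, b_n\}$ is unitriangular, so the latter is still an $A$-basis of $L$.

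Rearranging, $a_j b'_j = \sum_i a_i b_i - \sum_{i \neq j} r_i b_i$. A short case analysis shows $\|r_i b_i\| < M$ for every $i \neq j$: for $i < j$ one uses $\|b_i\| \leq \|b_j\|$ together with $|r_i| < |a_j| = M - \|b_j\|$; for $i > j$ the condition $i \notin I$ forces $|a_i| < |a_j|$, hence $q_i = 0$ and $r_i = a_i$, so $\|r_i b_i\| = \|a_i b_i\| < M$. Combined with $\|\sum a_i b_i\| < M$, this yields $\|a_j b'_j\| < M = |a_j| + \|b_j\|$, i.e. $\|b'_j\| < \|b_j\|$. The sorted norm-tuple of the new basis is then lexicographically strictly smaller than that of $\mathcal{B}$, contradicting minimality. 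The main subtlety is the choice of $j$ (namely $j = \max I$ under the sorted order), which is precisely what makes both cases of the estimate $\|r_i b_i\| < M$ work uniformly; the remainder of the argument is a clean Euclidean-division descent.
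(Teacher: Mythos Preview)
Your argument is correct. The paper itself does not give a proof of this theorem at all: it merely cites \cite{Len}, \cite{Sch}, \cite{Schoe}, and the in-preparation manuscript \cite{Icke}, remarking that the result ``is not difficult to prove for an abstract normed space'' but that the proof is omitted ``because we do not need it for our purposes.'' So there is nothing to compare against in the paper; you have supplied what the author chose to leave out.

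Your approach is the classical Lenstra-style reduction over $A=k[t]$: minimize the sorted norm vector lexicographically and use Euclidean division to produce a strictly smaller vector if the basis is not reduced. The well-ordering step is fine: elements of $L$ with pairwise distinct norms are $k$-linearly independent by Lemma~\ref{ineq} (for $c\in k^*$ one has $|c|=0$, so $\Vert cx\Vert=\Vert x\Vert$), hence condition~(4) of Definition~\ref{norm} forces $\{\Vert x\Vert:x\in L\setminus\{0\}\}\cap(-\infty,r]$ to be finite for every $r$, and the lex order on $n$-tuples from a well-ordered set is again a well-order. The key combinatorial choice $j=\max I$ is exactly right and makes both cases $i<j$ and $i>j$ go through; note in particular that your case analysis actually shows $q_i=0$ for $i>j$, so the transition matrix is genuinely unitriangular as you claim (and in any case has determinant $1$ over $A$, which is all you need to conclude that $\{b_1,\dots,b'_j,\dots,b_n\}$ is again an $A$-basis). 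Replacing $\Vert b_j\Vert$ by the strictly smaller $\Vert b'_j\Vert$ and re-sorting indeed yields a lexicographically smaller tuple, giving the desired contradiction.
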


\begin{proof}
In the literature, there are several proofs of this fact for particular normed spaces \cite{Len}, \cite{Sch}, \cite{Schoe}. It is not difficult to prove this for an abstract normed space \cite{Icke}, but we do not include the proof here because we do not need it for our purposes. 
\end{proof}

\subsection{Orthonormal basis and determinant}

\begin{definition}
Let $E$ be a normed space and $\bb$ a reduced basis of $E$. 
We say that $\bb$ is \emph{orthonormal} if $-1<\v{b}\leq 0$, for all $b\in\bb$. 
\end{definition}

\noindent Clearly, if $\bb$ is a reduced basis of $E$, then $\{t^{m_b}b\mid b\in\bb\}$ is an orthonormal basis, if we take $m_b=-\lceil\v{b}\rceil$. 

\noindent We now consider transition matrices between orthonormal bases. For two bases $\bb:=\{b_1,\dots,b_n\}$ and $\bb':=\{b'_1,\dots,b'_n\}$ in a normed space, a transition matrix from $\bb$ to $\bb'$ is defined to be a matrix $T\in \gl n{K}$ such that $(b'_1\dots b'_n)T=(b_1\dots b_n)$.

\begin{definition}
Let $m=m_1+\cdots+m_\kappa$ be a partition of a positive integer $m$ into a sum of positive integers. Let $T$ be a $m\times m$ matrix with entries in $A_\infty$. The partition of $m$ determines a decomposition of $T$ into blocks:
$$
T=(T_{ij}),\quad T_{ij}\in A_\infty^{m_i\times m_j},\ 1\le i,j\le \kappa.
$$  
\noindent The \emph{orthogonal group} $O(m_1,\dots,m_\kappa,A_\infty)$ is the set of all $T\in K^{m\times m}$ which satisfy the following two conditions:
\begin{enumerate}
\item $T_{ii}\in\gl{m_i}{A_\infty}$, for all $1\le i\le \kappa$.
\item $T_{ij}\in \m_\infty^{m_i\times m_j}$, for all $i> j$.
\end{enumerate}
\end{definition}

\begin{theorem}\label{transmat}

The \emph{orthogonal group} $O(m_1,\dots,m_\kappa,A_\infty)$ is a subgroup of \linebreak $\gl m {A_\infty}$. In particular, the determinant of a matrix in $O(m_1,\dots,m_\kappa,K)$ belongs to $U_\infty$.  
\end{theorem}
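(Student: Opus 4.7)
The plan is to exploit the block-triangular structure modulo the maximal ideal $\m_\infty$ of $A_\infty$. For any $T\in O(m_1,\dots,m_\kappa,A_\infty)$, condition (2) makes every subdiagonal block vanish upon reduction to $k=A_\infty/\m_\infty$, while condition (1) ensures that $\det T_{ii}\in A_\infty^\times$ reduces to a nonzero element of $k$. Thus $\bar T\in k^{m\times m}$ is block upper triangular with each $\bar T_{ii}\in\gl{m_i}{k}$, so $\det\bar T=\prod_{i=1}^\kappa\det\bar T_{ii}\in k^\times$. This forces $\det T\in A_\infty\setminus\m_\infty=U_\infty$, which simultaneously gives $O(m_1,\dots,m_\kappa,A_\infty)\subseteq\gl m{A_\infty}$ and proves the ``In particular'' clause.

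Closure under multiplication is then a direct block computation: for $T,S\in O$ and $i>j$, the block $(TS)_{ij}=\sum_\ell T_{i\ell}S_{\ell j}$ is analyzed summand by summand, since for each $\ell$ either $\ell<i$ (so $T_{i\ell}\in\m_\infty^{m_i\times m_\ell}$ by condition (2) for $T$) or $\ell\ge i>j$ (so $S_{\ell j}\in\m_\infty^{m_\ell\times m_j}$ by condition (2) for $S$); either way the summand lies in $\m_\infty^{m_i\times m_j}$. The same dichotomy shows $(TS)_{ii}\equiv T_{ii}S_{ii}\pmod{\m_\infty}$, and the right-hand side is invertible over $A_\infty$, so $(TS)_{ii}\in\gl{m_i}{A_\infty}$; the identity matrix trivially lies in $O$.

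For inverses, the first step already gives $T\in\gl m{A_\infty}$, so $T^{-1}$ has entries in $A_\infty$. Since $\bar T$ is block upper triangular over the field $k$ with invertible diagonal blocks, its inverse $\bar T^{-1}=\overline{T^{-1}}$ is block upper triangular with diagonal blocks $\bar T_{ii}^{-1}$, by the standard formula for inverses of block triangular matrices. Consequently the subdiagonal blocks of $T^{-1}$ reduce to zero (hence lie in $\m_\infty$), and the diagonal blocks reduce to invertible matrices (hence lie in $\gl{m_i}{A_\infty}$), so $T^{-1}\in O$. The whole argument is essentially one reduction modulo $\m_\infty$, so no real obstacle arises; the only point that needs a moment of care is the combinatorial case split in the product computation, which rests on the elementary observation that for $i>j$ no single index $\ell$ can satisfy both $\ell\ge i$ and $\ell\le j$.
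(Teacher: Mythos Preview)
Your proof is correct and uses exactly the same idea as the paper's: reduce modulo $\m_\infty$, observe that $\bar T$ is block upper triangular with invertible diagonal blocks, and conclude $\det T\in U_\infty$. The paper's proof stops there, so it literally only establishes the ``In particular'' clause (and hence $O\subseteq\gl m{A_\infty}$); it does not verify closure under products and inverses. You supply those missing verifications with the block computation for $TS$ and the block-triangular inverse argument for $T^{-1}$, which makes your proof strictly more complete than the one in the paper while remaining in the same spirit.
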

\begin{proof}
The image of $T\in O(m_1,\dots,m_\kappa,A_\infty)$ under the reduction homomorphism $A_\infty\rightarrow A_\infty/\m_\infty\cong k$ is an invertible matrix; hence $\det(T)\in U_\infty$.
\end{proof}

\begin{proposition}\label{transition} 
Let $\bb$ be an orthonormal basis of $E$, whose vectors are ordered by increasing length and let $m_1,\dots,m_\kappa$ be the multiplicities of the lengths of the vectors of $\bb$. Let $\bb'$ be a basis of $E$, whose vectors are ordered by increasing length. Then $\bb'$ is orthonormal if and only if the transition matrix from $\bb'$ to $\bb$ belongs to the orthogonal group $O(m_1,\dots,m_\kappa,A_\infty)$.  
\end{proposition}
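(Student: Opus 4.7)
The proof has two directions, both driven by the numerical observation that $\bb$ orthonormal means every $\ell_I\in(-1,0]$, so any difference $\ell_I-\ell_{I'}$ lies in $(-1,1)$; combined with $|t|\in\Z\cup\{-\infty\}$ for $t\in K$, this localizes every entry of $T$ (and of $T^{-1}$) either in $A_\infty$ or in $\m_\infty$.

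\textbf{Only if.} Suppose $\bb'$ is orthonormal. Writing $b'_j=\sum_i t_{ij}b_i$ and using reducedness of $\bb$, one has $\|b'_j\|=\max_i(|t_{ij}|+\ell_i)$, so $\|b'_j\|\le 0$ forces $|t_{ij}|\le -\ell_i<1$, hence $t_{ij}\in A_\infty$; the symmetric statement, with the roles of $\bb$ and $\bb'$ exchanged, applies to $T^{-1}$ and places $T\in\gl n{A_\infty}$. The refined bound $|t_{ij}|\le \|b'_j\|-\ell_i$ shows $t_{ij}\in\m_\infty$ whenever $\ell_i>\|b'_j\|$, so the reduction $\bar T\in\gl n{k}$ has $\bar t_{ij}=0$ in that range. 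Since $\det\bar T\ne 0$, some permutation $\sigma$ realizes $\bar t_{\sigma(j),j}\ne 0$ for every $j$; this forces $\ell_{\sigma(j)}\le\|b'_j\|$ and, summing, $\sum_i\ell_i\le\sum_j\|b'_j\|$. Repeating the argument with $T^{-1}$ gives the reverse inequality, so equality holds in every term and the sorted multisets coincide: $\|b'_j\|=\ell_j$. Consequently $t_{ij}\in\m_\infty$ whenever the block of $i$ is strictly above the block of $j$ (condition (2)); and reducing $T$ modulo $\m_\infty$ makes $\bar T$ block upper triangular and invertible, forcing each $\bar T_{II}\in\gl{m_I}{k}$, which lifts to $T_{II}\in\gl{m_I}{A_\infty}$ (condition (1)).

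\textbf{If.} Suppose $T\in O(m_1,\dots,m_\kappa,A_\infty)$. For $j\in J$ decompose $b'_j=\sum_I v_I$ with $v_I=\sum_{i\in I}t_{ij}b_i$. For $I>J$ the $\m_\infty$-entries give $\|v_I\|\le -1+\ell_I<\ell_J$ (using $\ell_I\le 0$ and $\ell_J>-1$); for $I<J$ one has $\|v_I\|\le\ell_I<\ell_J$; for $I=J$, the invertibility of $T_{JJ}$ modulo $\m_\infty$ places a unit entry in column $j$ of $T_{JJ}$, so reducedness of $\bb$ on the $J$-th block gives $\|v_J\|=\ell_J$. Lemma~\ref{ineq} then yields $\|b'_j\|=\ell_J\in(-1,0]$, with the ordering preserved. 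For reducedness of $\bb'$, I invoke Theorem~\ref{transmat}: the group property gives $T^{-1}\in O(m_1,\dots,m_\kappa,A_\infty)$ as well. Given $c_j\in K$ and $x=\sum c_jb'_j=\sum d_ib_i$, the relation $c=T^{-1}d$ yields $|c_j|\le\max_i(|(T^{-1})_{ji}|+|d_i|)$; the same three-case analysis (the case $I<J$ uses $(T^{-1})_{ji}\in\m_\infty$ together with $\ell_J-\ell_I<1$) produces $|c_j|+\ell_J\le\max_i(|d_i|+\ell_I)=\|x\|$. Taking the maximum over $j$ gives the lower bound $\max_j(|c_j|+\|b'_j\|)\le\|x\|$; the matching upper bound is automatic from the ultrametric inequality.

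The main obstacle is the length-matching step in the only-if direction: the non-vanishing of $\bar t_{\sigma(j),j}$ produces only the inequality $\ell_{\sigma(j)}\le\|b'_j\|$, and upgrading this to an equality requires the global argument with $T^{-1}$ together with the equality of sums. Once the blocks align, the rest is forced mechanically by the single numerical constraint $\ell_I-\ell_{I'}\in(-1,1)$.
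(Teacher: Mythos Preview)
The paper does not actually prove this proposition; it merely cites \cite{Icke} (``Lattices and Riemann-Roch spaces'', in preparation). Your argument is therefore not a comparison target but a genuine contribution, and it is correct.

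A few remarks on the details. In the ``only if'' direction, the permutation argument is the right way to force the length-matching $\|b'_j\|=\ell_j$: from $\bar t_{\sigma(j),j}\ne 0$ you get $\ell_{\sigma(j)}\le\|b'_j\|$, and together with the symmetric inequality coming from $T^{-1}$ and the equality of sums you correctly deduce termwise equality; since both sequences are increasingly ordered, the multisets and hence the sequences coincide. Once $\|b'_j\|=\ell_j$, the block conditions drop out exactly as you describe.

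In the ``if'' direction, your computation of $\|b'_j\|=\ell_J$ is clean: the only place a unit appears in column $j$ is inside the diagonal block $T_{JJ}$ (because $\bar T_{JJ}$ is invertible over $k$, its columns are nonzero), while the off-diagonal contributions are strictly smaller thanks to the bound $\ell_I-\ell_J\in(-1,1)$. For reducedness you correctly invoke Theorem~\ref{transmat} to place $T^{-1}$ back in $O(m_1,\dots,m_\kappa,A_\infty)$; the three-case estimate for $|(T^{-1})_{ji}|+\ell_J\le\ell_I$ is accurate (in particular, for $I<J$ the entry sits in the $(J,I)$-block of $T^{-1}$, which is below the diagonal, so condition (2) gives the $\m_\infty$-membership you need).

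In short: your proof is complete and self-contained, and supplies what the paper omits.
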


\begin{proof}
A proof can be found in  \cite{Icke}. 
\end{proof}

\noindent Now we define the determinant of a lattice.
\begin {definition}
Let $\bb$ be a basis of a normed space $E$. We define $d(\bb)\in K^*/U_\infty$ to be the class modulo $U_\infty$ of the determinant of the transition matrix from $\bb$ to an orthonormal basis of $E$. We call $d(\bb)$ the \emph{determinant} of $\bb$. 
\end {definition}

This invariant is well-defined because the transition matrix between two orthonormal bases of $E$ has determinant in $U_\infty$ by Theorem \ref{transmat} and Proposition \ref{transition}.

\begin {definition}
Let $L$ be a lattice inside a normed space $E$. We define $d(L)\in K^*/U_\infty$ to be the determinant of any basis of $L$. We call $d(L)$ the \emph{determinant} of $L$.
\end {definition}

This invariant is well-defined because the transition matrix between two bases of $L$ has determinant in $k^*\subset U_\infty$. Note that $|d(\bb)|,|d(L)|\in\Z$ are well-defined.

\begin{lemma}\label{dL}
Let $E$ be a normed space,  $L\subset E$ a lattice and $\bb:=\{b_1,\dots,b_n\}$ a reduced basis of $L$. Then, the determinant of $L$ satisfies
$$
|d(L)|=\sum_{i=1}^n\lceil \v{b_i}\rceil.
$$
\end{lemma}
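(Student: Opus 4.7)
The plan is to construct an explicit orthonormal basis out of $\bb$ and read off the determinant from the (diagonal) transition matrix. Set $m_i:=-\lceil\v{b_i}\rceil$ for $1\le i\le n$. By the remark immediately following the definition of orthonormal basis, the family $\bb':=\{t^{m_i}b_i:1\le i\le n\}$ is an orthonormal basis of $E$, since $\bb$, being a reduced basis of $L$, is automatically a reduced $K$-basis of $E$ (condition (1) in the definition of reduced is phrased over $K$).

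With this particular choice of orthonormal basis, the transition matrix $T\in\gl{n}{K}$ from $\bb$ to $\bb'$, characterized by $(b'_1\cdots b'_n)T=(b_1\cdots b_n)$, is forced by the $K$-linear independence of $\bb$ to be the diagonal matrix $T=\mathrm{diag}(t^{-m_1},\dots,t^{-m_n})$. Therefore
$$
\det(T)=t^{-\sum_{i=1}^n m_i}=t^{\sum_{i=1}^n\lceil \v{b_i}\rceil},
$$
and applying $|~|=-v_\infty$ yields $|d(L)|=\sum_{i=1}^n\lceil\v{b_i}\rceil$, as claimed.

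Independence of the right-hand side from the choice of orthonormal basis is already built into the definition of $d(L)$: by Proposition \ref{transition} and Theorem \ref{transmat}, any two orthonormal bases of $E$ are related by a matrix in some orthogonal group $O(m_1,\dots,m_\kappa,A_\infty)$, whose determinant lies in $U_\infty$ and thus has $|~|=0$. There is no real obstacle in the argument; the whole content is the observation that coordinate-wise scaling of a reduced basis by the appropriate powers of $t$ produces an orthonormal basis, making the transition matrix diagonal.
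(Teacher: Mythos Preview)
Your proof is correct and follows essentially the same approach as the paper: scale the reduced basis by $t^{m_i}$ with $m_i=-\lceil\v{b_i}\rceil$ to obtain an orthonormal basis, observe that the resulting transition matrix is diagonal, and read off $|d(L)|$ from its determinant. The final paragraph on well-definedness is superfluous (this is already absorbed into the definition of $d(L)$), but it does no harm.
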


\begin{proof}
Since $\bb$ is a reduced basis, the set $\bb':=\{t^{m_1}b_1,\dots,t^{m_n}b_n\}$ with $m_i:=-\lceil \v{b_i}\rceil$, $1\leq i\leq n$, is an orthonormal basis of $E$ and $T:=\mathrm{diag}(t^{m_1},\dots,t^{m_n})^{-1}$ is a transition matrix from $\bb$ to $\bb'$. By definition, $|d(L)|$ satisfies
$$
|d(L)|=|\det(T)|=-\sum_{i=1}^nm_i=\sum_{i=1}^n\lceil \v{b_i}\rceil.
$$
\end{proof}

\section{Genus of function fields}
\subsection{Riemann-Roch theory and lattices}

Let $F/k$ be a function field of genus $g$ and denote $e(P|\infty)$ the ramification index of $P$ over $\infty$. We consider a divisor
$$
D+r(t)_\infty=\sum_{Q\in\mathbb{P}_0}\alpha_Q\cdot Q+\sum_{P\in\mathbb{P}_\infty}(\beta_P+re(P|\infty))\cdot P,
$$
with $r\in\Z$. The places $Q\in\mathbb{P}_0$ and $P\in\mathbb{P}_\infty$ are in 1:1 correspondence to prime ideals $\mathcal{Q}$ of $\oo_F$ and $\mathcal{P}$ of $\oo_{F,\infty}$, respectively. The Riemann-Roch space  of $D+r(t)_\infty$ satisfies
\begin{align*}
\mathcal{L}(D+r(t)_\infty) =\ii_0\cap\ii_\infty
\end{align*}
with fractional ideals $\ii_0:=\prod_{Q\in\mathbb{P}_0}\mathcal{Q}^{-\alpha_Q}$ and $\ii_\infty:=t^{-r}\cdot\prod_{P\in\mathbb{P}_\infty}\mathcal{P}^{-\beta_P}$ of $\oo_F$ and $\oo_{F,\infty}$, respectively. We consider the norm on $F$:
\begin{align*}
\v{~}:F\rightarrow \{-\infty\}\cup\Q,\ \v{z}=-\min_{P\in\mathbb{P}_\infty}\left\{\frac{v_P(z)+v_P(D)}{e(P|\infty)}\right\}.
\end{align*}
Thus, $(F,\v{~})$ becomes a normed space. The fractional ideal $\ii_0$, equipped with the norm $\v{\ }$, is a lattice in $(F,\v{\ })$. Clearly, any divisor $D$ induces a norm $\v{~}_D$ and a normed space $(F,\v{~}_D)$. As our consideration is relative to a fixed divisor $D$, we write $\v{~}$ instead of $\v{~}_D$. Note that $\ii_0$ does not depend on $r$.

\begin{theorem}[{\cite{Schoe}[Satz III.17]}]
Let $\bb:=\{b_1,\dots,b_n\}$ be a reduced basis of $\ii_0$. Then,
$$
\{b_it^{j_i}\mid 1\leq i\leq n,\  0\leq j_i\leq- \lceil \v{b_i}\rceil +r\}
$$
is a $k$-basis of $\mathcal{L}(D+r(t)_\infty)$.
\end{theorem}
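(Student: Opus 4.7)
The plan is to first translate the condition ``$z \in \mathcal{L}(D+r(t)_\infty)$'' into the intersection form already identified in the text, then use the reducedness of $\bb$ to separate variables across the basis.

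First, I would unpack the divisor condition. By the definition of $(z)_\infty$ for $z = t$ we have $(t)_\infty = \sum_{P\mid\infty} e(P\mid\infty)\cdot P$, so that $z\in\mathcal{L}(D+r(t)_\infty)$ if and only if
$$
v_Q(z)\ge -\alpha_Q\text{ for all }Q\in\mathbb{P}_0,\qquad \frac{v_P(z)+v_P(D)}{e(P\mid\infty)}\ge -r\text{ for all }P\in\mathbb{P}_\infty.
$$
The first condition is exactly $z\in\ii_0$. Minimizing the left-hand side of the second condition over $P\mid\infty$ and negating shows it is equivalent to $\v{z}\le r$. Hence
$$
\mathcal{L}(D+r(t)_\infty)=\{z\in\ii_0\mid\v{z}\le r\}.
$$

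Next I would verify membership of the proposed set. Given $1\le i\le n$ and $0\le j_i\le -\lceil\v{b_i}\rceil+r$, the element $b_it^{j_i}$ lies in $\ii_0$ since $b_i\in\ii_0$ and $t^{j_i}\in A$. Using property (2) of the norm, $\v{b_it^{j_i}}=j_i+\v{b_i}$, so the bound $j_i\le r-\lceil\v{b_i}\rceil$, together with $\lceil\v{b_i}\rceil\ge\v{b_i}$, yields $\v{b_it^{j_i}}\le r$, and so $b_it^{j_i}\in\mathcal{L}(D+r(t)_\infty)$.

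For linear independence over $k$, I would collect a relation $\sum_{i,j_i}\la_{i,j_i}\,b_it^{j_i}=0$ as $\sum_i\bigl(\sum_{j_i}\la_{i,j_i}t^{j_i}\bigr)b_i=0$; since $\bb$ is an $A$-basis of $\ii_0$, it is $K$-linearly independent in $F$, forcing each polynomial $\sum_{j_i}\la_{i,j_i}t^{j_i}$ to vanish, and hence every coefficient $\la_{i,j_i}=0$.

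The only substantive step is spanning, and this is where reducedness is essential. Given $z\in\mathcal{L}(D+r(t)_\infty)$, write $z=\sum_{i=1}^n a_i(t)b_i$ with $a_i\in A$, using that $\bb$ is an $A$-basis of $\ii_0$. Because $\bb$ is reduced,
$$
\v{z}=\max_{1\le i\le n}\v{a_ib_i}=\max_{1\le i\le n}\bigl(\deg a_i+\v{b_i}\bigr)\le r,
$$
so each nonzero $a_i$ satisfies $\deg a_i\le r-\v{b_i}$, and since $\deg a_i\in\Z$ this sharpens to $\deg a_i\le r-\lceil\v{b_i}\rceil$. Expanding $a_i=\sum_{j_i=0}^{\deg a_i}\la_{i,j_i}t^{j_i}$ with $\la_{i,j_i}\in k$ expresses $z$ as a $k$-linear combination of the proposed basis vectors, completing the proof.

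The sole conceptual step is the spanning argument, and its real content is the passage from the global bound $\v{z}\le r$ to individual bounds on the $\deg a_i$. That passage is exactly what the reduced-basis property supplies, so once Theorem~\ref{exist} and the description $\mathcal{L}(D+r(t)_\infty)=\ii_0\cap\{\v{\,\cdot\,}\le r\}$ are in hand, no further obstacle remains.
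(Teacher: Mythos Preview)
Your proof is correct and follows essentially the same route as the paper's: both translate membership in $\mathcal{L}(D+r(t)_\infty)$ into the condition $z\in\ii_0$ together with $\v{z}\le r$, then use reducedness of $\bb$ to pass from the global bound $\v{z}\le r$ to individual degree bounds $\deg a_i\le -\lceil\v{b_i}\rceil+r$ on the $A$-coefficients. You are simply more explicit about the membership and linear independence checks, which the paper leaves implicit in its ``if and only if'' formulation.
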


\begin{proof}
Let $z=\sum_{i=1}^n\lambda_ib_i$, with $\lambda_i$ in $A$, be an arbitrary element of $\ii_0$. The element $z$ belongs to $\mathcal{L}(D+r(t)_\infty)=\ii_0\cap \ii_\infty$ if and only if $v_P(z)\geq -v_P(D)-re(P|\infty)$, for all $P\in\mathbb{P}_\infty$. This condition can be expressed as
\begin{align*}
 \min_{P\in\mathbb{P}_\infty}\left\{\frac{v_P(z)+v_P(D)}{e(P|\infty)}\right\}\geq -r
\Longleftrightarrow  \v{z}\leq r\Longleftrightarrow& \max_{i=1}^n\{\v{\lambda_ib_i}\}\leq r,
\end{align*}
or equivalently, $|\lambda_i| \leq -\v{b_i}+r$, for all $i\in\{1,\dots,n\}$.
Since the coefficients $\lambda_i$ are polynomials, we obtain $z\in\mathcal{L}(D+r(t)_\infty)$ if and only if $|\lambda_i|=\deg(\lambda_i)\leq \lfloor -\v{b_i}\rfloor+r=- \lceil \v{b_i}\rceil+r$, for all $1\leq i\leq n$.
\end{proof}

\begin{corollary}\label{dim}
Let $\bb:=\{b_1,\dots,b_n\}$ be a reduced basis of $\ii_0$. Then,
$$
\dim_k(D+r(t)_\infty)=\sum_{\lceil\v{b_i}\rceil\leq r}(-\lceil\v{b_i}\rceil +r+1).
$$
\end{corollary}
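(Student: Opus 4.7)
The plan is to read off the dimension directly from the explicit $k$-basis provided by the preceding theorem. That theorem exhibits
$$
\mathcal{B} = \{b_i t^{j_i} \mid 1\leq i\leq n,\ 0\leq j_i\leq -\lceil\v{b_i}\rceil+r\}
$$
as a $k$-basis of $\mathcal{L}(D+r(t)_\infty)$, so $\dim_k(D+r(t)_\infty) = \#\mathcal{B}$.

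I would compute $\#\mathcal{B}$ by grouping basis elements according to the index $i$. For a fixed $i$, the set of admissible exponents $j_i$ is $\{0,1,\dots,-\lceil\v{b_i}\rceil+r\}$, which is non-empty precisely when $-\lceil\v{b_i}\rceil+r\geq 0$, i.e.\ when $\lceil\v{b_i}\rceil\leq r$; in that case it has cardinality $-\lceil\v{b_i}\rceil+r+1$, while for $\lceil\v{b_i}\rceil>r$ there is no admissible $j_i$ and this index contributes nothing.

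Summing the contributions over $i$ therefore yields
$$
\dim_k(D+r(t)_\infty)=\sum_{i\colon\lceil\v{b_i}\rceil\leq r}(-\lceil\v{b_i}\rceil+r+1),
$$
which is exactly the stated formula. There is no real obstacle here; the only mild subtlety is being careful about the sign and ceiling in the condition that singles out which indices $i$ actually contribute, so that one does not mistakenly include negative summands from indices with $\lceil\v{b_i}\rceil>r$.
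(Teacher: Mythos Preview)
Your proof is correct and is exactly the argument the paper has in mind: the corollary is stated without proof because it follows immediately by counting the elements of the $k$-basis provided by the preceding theorem, precisely as you do.
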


\begin{corollary}[{\cite{F.H.}[Corollary 5.5]}]\label{flo}
Let $D$ be a divisor of $F$, and $\ii_0$ and $\ii_\infty$ fractional ideals of $\oo_F$ and $\oo_{F,\infty}$, respectively, such that $\ll(D)=\ii_0 \cap \ii_\infty$ . Then, it holds
\begin{align*}
-|d(\ii_0)|=\deg_k(D)+[k_0:k](1-g)-n,
\end{align*}
where $k_0$ is the full constant field of $F/k$.
\end{corollary}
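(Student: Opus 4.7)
The plan is to compare $\dim_k\ll(D+r(t)_\infty)$ computed in two different ways for $r$ sufficiently large, and read off the identity by matching the constant terms in $r$.

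First I would fix a reduced basis $\bb=\{b_1,\dots,b_n\}$ of the lattice $\ii_0$ guaranteed by Theorem \ref{exist}, and choose an integer $r$ large enough that two conditions hold simultaneously: (i) $\lceil\v{b_i}\rceil\le r$ for every $i$, so that the sum in Corollary \ref{dim} runs over all indices; and (ii) $\deg_k(D+r(t)_\infty)>2g-2$, so that the speciality term in the Riemann--Roch theorem vanishes. Both conditions hold for all sufficiently large $r$, so such an $r$ exists.

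For such $r$, Corollary \ref{dim} combined with Lemma \ref{dL} gives the lattice-theoretic count
$$
\dim_k(D+r(t)_\infty)=\sum_{i=1}^n\bigl(-\lceil\v{b_i}\rceil+r+1\bigr)=-|d(\ii_0)|+nr+n.
$$
On the other hand, the Riemann--Roch theorem for function fields with full constant field $k_0$ yields
$$
\dim_k(D+r(t)_\infty)=\deg_k(D+r(t)_\infty)+[k_0:k](1-g),
$$
since $\deg_k(D+r(t)_\infty)>2g-2$. The degree of $(t)_\infty$ over $k$ is computed from the fundamental identity $\sum_{P\mid\infty}e(P\mid\infty)\deg_k P=[F:K]=n$, giving $\deg_k(D+r(t)_\infty)=\deg_k(D)+rn$. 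Equating the two expressions and canceling the term $rn$ yields
$$
-|d(\ii_0)|+n=\deg_k(D)+[k_0:k](1-g),
$$
which is the claimed formula.

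The only real obstacle is bookkeeping: one must verify that a single $r$ can be chosen to satisfy both (i) and (ii) at once, and that the $r$-dependence of both sides matches (the coefficient of $r$ must be $n$ on both sides, which is exactly why the formula is independent of $r$). Once those are in place, the proof is a short comparison of two dimension formulas, with no further computation needed.
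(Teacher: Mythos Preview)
Your proof is correct and follows essentially the same route as the paper's own argument: compute $\dim_k(D+r(t)_\infty)$ for large $r$ both via Corollary~\ref{dim} (using a reduced basis of $\ii_0$ and Lemma~\ref{dL}) and via the Riemann--Roch theorem, then equate and cancel the $rn$ term. The paper's proof is slightly terser---it does not spell out the fundamental identity for $\deg_k(t)_\infty$ or the simultaneous choice of $r$---but the structure is identical.
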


\begin{proof}
Let $\{b_1,\dots,b_n\}$ be a reduced basis of $\ii_0$. For a sufficiently large $r\in \Z$, Corollary \ref{dim} shows that $\dim_k(D+r(t)_\infty) =(\sum_{i=1}^n-\lceil\v{b_i}\rceil )+nr+n$. Also, for large $r$ we obtain by the Riemann-Roch theorem
\begin{align}\label{GenusFlo}
\dim_k(D+r(t)_\infty)&=\deg_k(D+r(t)_\infty)+[k_0:k](1-g)\\
&=\deg_k D+rn+[k_0:k](1-g).\notag
\end{align}
So, finally we have $\deg_kD+[k_0:k](1-g)-n=-\sum_{i=1}^n\lceil \v{b_i}\rceil=-|d(\ii_0)|$, where the last equality follows from Lemma \ref{dL}.
\end{proof}

\begin{theorem}\label{preformular}
Let $F/k$ be a function field with defining polynomial $f$ and let $\theta\in F$ be a root of $f$. Let $D$ be a divisor with $\mathcal{L}(D)=\ii_0\cap \ii_\infty$, where $\ii_0$ and $\ii_\infty$ are fractional ideals of $\oo_F$ and $\oo_{F,\infty}$, respectively. Then,
\begin{align*}
|d(\ii_0)|=-|[\ii_0:A[\theta]]|+|[\ii_\infty:A_\infty[\theta_\infty]]|+\cf n(n-1)/2.
\end{align*}
\end{theorem}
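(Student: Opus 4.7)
The plan is to factor $|d(\ii_0)|$ through a chain of three equalities involving the intermediate quantities $|d(A[\theta])|$, $|d(A_\infty[\theta_\infty])|$, and $|d(\ii_\infty)|$. The determinant $d$ extends in the obvious way from $A$-lattices to $A_\infty$-lattices inside $F$: for any $A_\infty$-basis $\bb$ of an $A_\infty$-lattice $L\subset F$, set $d(L):=d(\bb)$, the class in $K^*/U_\infty$ of the determinant of the transition from $\bb$ to an orthonormal basis of $F$; this is well defined because $A_\infty^*=U_\infty$, so change-of-$A_\infty$-basis matrices have determinant in $U_\infty$.

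Both $\ii_0$ and $A[\theta]$ are full-rank $A$-lattices in $F$, and every $A$-basis is in particular a $K$-basis. Taking an orthonormal basis of $F$ as an intermediate reference, the transition matrix from an $A$-basis of $\ii_0$ to an $A$-basis of $A[\theta]$ has determinant representing $[\ii_0:A[\theta]]$ modulo $U_\infty$, so applying $|\cdot|=-v_\infty$ yields $|d(\ii_0)|=|d(A[\theta])|-|[\ii_0:A[\theta]]|$. Exactly the same manoeuvre, now with $A_\infty$-bases of the $A_\infty$-lattices $\ii_\infty$ and $A_\infty[\theta_\infty]$, gives $|d(A_\infty[\theta_\infty])|=|d(\ii_\infty)|+|[\ii_\infty:A_\infty[\theta_\infty]]|$. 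For the middle link, the relation $\theta=t^{\cf}\theta_\infty$ makes the transition matrix between the power bases $\{\theta^{i}\}_{i=0}^{n-1}$ and $\{\theta_\infty^{i}\}_{i=0}^{n-1}$ diagonal with entries $t^{i\cf}$, so $|d(A[\theta])|=|d(A_\infty[\theta_\infty])|+\cf n(n-1)/2$. Summing the three equations telescopes to the claimed identity, provided $|d(\ii_\infty)|=0$.

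This last claim is the main obstacle. The key observation is that $\ii_\infty=\prod_{P\in\mathbb{P}_\infty}\mathcal{P}^{-\beta_P}$ coincides with the unit ball $\{z\in F:\v{z}\le 0\}$, because the condition $\v{z}\le 0$ unpacks into $v_P(z)\ge -\beta_P$ for every $P\mid\infty$. Invoking the $A_\infty$-analogue of Theorem~\ref{exist}, let $\bb=\{b_1,\ldots,b_n\}$ be a reduced $A_\infty$-basis of $\ii_\infty$. Each $b_i\in\ii_\infty$ forces $\v{b_i}\le 0$; conversely, if $\v{b_i}\le -1$ for some $i$, then $tb_i\in\ii_\infty$ because $\v{tb_i}=|t|+\v{b_i}=1+\v{b_i}\le 0$, yet $tb_i$ cannot lie in the $A_\infty$-span of $\bb$, since $\bb$ is a $K$-basis of $F$ with unique expansions and $t\notin A_\infty$. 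This contradicts that $\bb$ generates $\ii_\infty$ over $A_\infty$, so $\v{b_i}\in(-1,0]$ for every $i$, $\bb$ is orthonormal, and the $A_\infty$-analogue of Lemma~\ref{dL} yields $|d(\ii_\infty)|=\sum_{i=1}^n\lceil\v{b_i}\rceil=0$, completing the proof.
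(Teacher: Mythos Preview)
Your proof is correct and follows essentially the same route as the paper: both factor the transition from $\ii_0$ to an orthonormal basis of $F$ through the power bases $\{\theta^i\}$ and $\{\theta_\infty^i\}$, linked by the diagonal matrix $\operatorname{diag}(1,t^{C_f},\dots,t^{C_f(n-1)})$. The paper simply asserts the existence of an orthonormal $A_\infty$-basis $\bb'$ of $\ii_\infty$ and computes with transition matrices directly, whereas you introduce the auxiliary notion $|d(\cdot)|$ for $A_\infty$-lattices and explicitly justify, via the unit-ball description $\ii_\infty=\{z:\Vert z\Vert\le 0\}$, that any reduced $A_\infty$-basis of $\ii_\infty$ is already orthonormal; this is a point the paper leaves implicit.
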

\begin{proof}
Let $\bb:=\{b_1,\dots,b_n\}$ be any basis of $\ii_0$ and $\bb':=\{b'_1,\dots,b'_n\}$ an orthonormal basis of $\ii_\infty$. We consider $M,M'\in K^{n\times n}$ with $(1\  \theta \dots\theta^{n-1})M=(b_1\dots b_n)$ and $(1\  \theta \dots\theta^{n-1})M'=(b'_1 \dots b'_n)$. Then, $T:=M'^{-1}M$ is a transition matrix from $\bb$ to $\bb'$ and by definition, 
\begin{align}\label{formula}
|d(\ii_0)|=|\det(T)|=|\det(M'^{-1})|+|\det(M)|=|\det(M'^{-1})|-|[\ii_0:A[\theta]]|.
\end{align}

\noindent Let $N:=\mathrm{diag}(1,t,\dots t^{-\cf(n-1)})$. Clearly, 
\begin{align*}
(1\ \theta_\infty\dots \theta_\infty^{n-1}) =(1\  \theta \dots\theta^{n-1})N=(b'_1 \dots b'_n)M'^{-1}N.
\end{align*}
Hence, $|[\ii_\infty:A_\infty[\theta_\infty]]|=|\det(M'^{-1}N)|=|\det(M'^{-1})|-\cf n(n-1)/2$, and therefore
$$
|\det(M'^{-1})|=|[\ii_\infty:A_\infty[\theta_\infty]|+\cf n(n-1)/2.
$$
Together with (\ref{formula}), we obtain the claimed formula for $|d(\ii_0)|$. 
\end{proof}

\subsection{Computation of the genus}

We apply Corollary \ref{flo} to the zero divisor $D:=(0)$. Then, $\ii_0$ becomes $\oo_F$ and $\ii_\infty=\oo_{F,\infty}$. Therefore,
$$
g=\frac{[k_0:k]-n+|d(\ii_0)|}{[k_0:k]}.
$$
By Theorem \ref{preformular} we obtain the following result.
\begin{corollary} 
For a function field $F/k$ with defining equation $f(t,\theta)=0$, the genus may be computed as:
\begin{align*}
g=\frac{[k_0:k]-n-|[\oo_F:A[\theta]]|+|[\oo_{F,\infty}:A_\infty[\theta_\infty]]|+\cf n(n-1)/2}{[k_0:k]}.
\end{align*}
If $[k_0:k]=1$, we obtain
$$g=1-n-|[\oo_F:A[\theta]]|+|[\oo_{F,\infty}:A_\infty[\theta_\infty]]|+\cf n(n-1)/2.$$
\end{corollary}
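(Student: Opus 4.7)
The plan is to combine the two ingredients already set up just before the statement: the expression for $g$ in terms of $|d(\ii_0)|$ coming from Corollary \ref{flo}, and the expression for $|d(\ii_0)|$ in terms of the two equation-order indices coming from Theorem \ref{preformular}. The natural choice is to specialize everything to the zero divisor $D=(0)$.

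First, I would observe that for $D=(0)$ all coefficients $\alpha_Q$ and $\beta_P$ in the decomposition $D+r(t)_\infty=\sum \alpha_Q Q+\sum(\beta_P+re(P|\infty))P$ vanish, so the fractional ideals built in the first part of Section 4 satisfy $\ii_0=\oo_F$ and $\ii_\infty=\oo_{F,\infty}$. Plugging $D=(0)$ and $\deg_k(D)=0$ into Corollary \ref{flo} then gives
\begin{equation*}
-|d(\oo_F)|=[k_0:k](1-g)-n,
\end{equation*}
which rearranges to the formula stated just before this corollary, namely
\begin{equation*}
g=\frac{[k_0:k]-n+|d(\oo_F)|}{[k_0:k]}.
\end{equation*}

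Next, I would apply Theorem \ref{preformular} with the same $D=(0)$, so that $\ii_0=\oo_F$ and $\ii_\infty=\oo_{F,\infty}$. This directly yields
\begin{equation*}
|d(\oo_F)|=-|[\oo_F:A[\theta]]|+|[\oo_{F,\infty}:A_\infty[\theta_\infty]]|+\cf n(n-1)/2.
\end{equation*}

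Substituting this expression for $|d(\oo_F)|$ into the displayed formula for $g$ gives the claimed identity. The special case $[k_0:k]=1$ is then just the general formula with the denominator collapsing to $1$. There is no real obstacle here: once the identification $\ii_0=\oo_F$, $\ii_\infty=\oo_{F,\infty}$ for $D=(0)$ is noted, the proof reduces to a one-line substitution. The only thing worth double-checking is the sign convention $-|d(\ii_0)|$ in Corollary \ref{flo} to make sure the resulting expression for $g$ has the right sign on the index at infinity.
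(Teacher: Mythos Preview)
Your proposal is correct and follows exactly the paper's own argument: specialize Corollary~\ref{flo} to $D=(0)$ so that $\ii_0=\oo_F$, $\ii_\infty=\oo_{F,\infty}$, obtain $g=([k_0:k]-n+|d(\oo_F)|)/[k_0:k]$, and then substitute the expression for $|d(\oo_F)|$ from Theorem~\ref{preformular}. The paper presents this as a one-line consequence (``By Theorem~\ref{preformular} we obtain\ldots''), and your write-up simply unpacks that substitution.
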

\bigskip
A conventional way to compute the genus $g$ of a function field $F/k$ proceeds as follows: Consider the divisor $D:=(r(t)_\infty)$ and the Riemann-Roch space $\ll(D)=\oo_F\cap t^{-r}\cdot\oo_{F,\infty}$. Determine the $\lceil\v{~}_D\rceil$-values of a $\v{~}_D$-reduced basis $\{b_1,\dots,b_n\}$ of $\oo_F$. For large $r$ (i.e. $r\geq 2g-1$), Corollary \ref{dim} and (\ref{GenusFlo}) show that
\begin{align}\label{genus2}
\sum_{\lceil \v{b_i}_D\rceil\leq r}(-\lceil \v{b_i}_D\rceil+r +1)=\dim_k D=rn +[k:k_0](1-g).
\end{align}
Since $[k:k_0]=\dim_k\ll(0)=\sum_{\lceil \v{b_i}_D\rceil\leq r}(-\lceil \v{b_i}_D\rceil +1)$, the genus $g$ can easily be deduced from (\ref{genus2}).\medskip

If the constant field $k$ is algebraically closed in the function field $F$ (e.g global function fields), the computation of the genus $g$ of $F$ can be reduced to the computation of the degree of the two indices $[\oo_F:A[\theta]]$ and $[\oo_{F,\infty}:A_\infty[\theta_\infty]]$. The valuations
$$
\ind_{p(t)}:=v_{p(t)}([\oo_F:A[\theta]]),\quad \indi:=v_\infty([\oo_{F,\infty}:A_\infty[\theta_\infty]])
$$
of these two indices are computed by the Montes algorithm \cite{HN2}, \cite{HN}. Therefore, we have the following method to determine $g$.

\begin{algorithm}\label{Algo2}
\caption{: Genus computation of global function fields}
\label{Algo2}
\begin{algorithmic}[1]

\REQUIRE A global function field $F/k$ with defining polynomial $f$ of degree $n$.
\ENSURE Genus $g$ of $F$.\\[0.5cm]

\STATE $f_\infty \leftarrow t^{-\cf n}f(t,t^\cf x)$
\STATE FiniteIndex$\ \leftarrow 0$
\STATE Factorize $\dsc(f)$
\FOR{all irreducible polynomials $p(t)$ with $v_{p(t)}(\dsc(f))\geq 2$}\label{loop}
\STATE$\ind_{p(t)}\leftarrow$ \noindent{\tt Montes-algorithm($f$,\,$p(t)$)}
\STATE FiniteIndex $\leftarrow$ FiniteIndex $+|p(t)|\cdot \ind_{p(t)}$
\ENDFOR
\STATE $\ind_\infty\leftarrow$\noindent{\tt Montes-algorithm($f_\infty$,\,$1/t$)}
\STATE \textbf{return} $1-n-$FiniteIndex$-\ind_\infty+\cf n(n-1)/2$

\end{algorithmic}
\end{algorithm}

\begin{theorem}\label{complexi}
Let $F/k$ be a function field over the finite field $k$ with $q$ elements and with defining polynomial $f$ of degree $n$. Then, Algorithm \ref{Algo2} needs at most 
$$
O(n^{5+\epsilon}\cf^{2+\epsilon}\log(q))
$$
operations in $k$ to determine the genus of $F$.
\end{theorem}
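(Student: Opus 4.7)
The plan is to bound each stage of Algorithm~\ref{Algo2} separately and show that every contribution fits inside $O(n^{5+\epsilon}\cf^{2+\epsilon}\log q)$. Two a priori bounds drive the argument. Since $\dsc(f)$ is isobaric of weight $n(n-1)$ in the coefficients $a_1,\dots,a_n$ when $a_i$ is given weight $i$, and $\deg_t a_i\leq i\cf$, one has $\deg_t\dsc(f)\leq n(n-1)\cf$. Chasing $x\mapsto t^\cf x$ through the product formula for the discriminant yields $\dsc(f_\infty)=t^{-n(n-1)\cf}\dsc(f)$, whence $\delta_\infty\leq n(n-1)\cf$ as well. Set $N:=n(n-1)\cf$.

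I would first dispose of the easy stages. Step~3 factors $\dsc(f)\in k[t]$, of degree at most $N$, in $O(N^{2+\epsilon}\log q)=O(n^{4+2\epsilon}\cf^{2+\epsilon}\log q)$ via a standard polynomial factorization over $\fq$. The single Montes call at infinity in step~8 has $\delta_\infty\leq N$, hence by \cite{BNS} costs $O(n^{2+\epsilon}+n^{3+\epsilon}\cf\log q+n^{5+2\epsilon}\cf^{2+\epsilon})$; this is again absorbed. Steps~1, 2 and 9 contribute at most $O(n^2\cf)$ operations in $k$ and are negligible.

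The substance is the loop~4--7 over irreducible $p(t)\mid\dsc(f)$ with $\delta_p:=v_{p(t)}(\dsc(f))\geq 2$. Writing $d_p:=\deg p$, the only global constraint is the linear inequality $\sum_p d_p\delta_p\leq\deg_t\dsc(f)\leq N$, which also gives $\max_p d_p\leq N/2$ and $\max_p\delta_p\leq N$. The per-prime Montes bound from \cite{BNS} splits into three monomials in $d_p$ and $\delta_p$; I would bound each sum via H\"older-type estimates such as
$$
\sum_p d_p^{a}\delta_p^{b}\ \leq\ \bigl(\max_p d_p\bigr)^{a-1}\bigl(\max_p\delta_p\bigr)^{b-1}\sum_p d_p\delta_p\ \leq\ N^{a+b-1}\qquad(a,b\geq 1),
$$
with an analogous bound for the term that carries no $\delta_p$ factor. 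The dominant contribution is the logarithmic one, $n^{1+\epsilon}(\log q)\sum_p d_p^{2+\epsilon}\delta_p\leq n^{1+\epsilon}N^{2+\epsilon}\log q=O(n^{5+3\epsilon}\cf^{2+\epsilon}\log q)$; the non-logarithmic term $n^{1+\epsilon}\sum_p d_p^{1+\epsilon}\delta_p^{2+\epsilon}\leq n^{1+\epsilon}N^{2+2\epsilon}$ is absorbed after using $\log q\geq\log 2$ and relabelling $\epsilon$.

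The main obstacle is exactly this bookkeeping: the per-prime Montes estimate is non-linear in both $d_p$ and $\delta_p$, whereas the only universal control is the linear $\sum_p d_p\delta_p\leq N$, so each monomial must be tamed individually and the resulting exponents of $n$, $\cf$ and $\log q$ from the three terms must be reconciled into the single clean bound $O(n^{5+\epsilon}\cf^{2+\epsilon}\log q)$.
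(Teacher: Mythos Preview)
Your approach is correct and essentially mirrors the paper's own proof: both first bound $\delta$ and $\delta_\infty$ by $N=n(n-1)\cf$ (the paper packages this as Lemma~\ref{boundy}, in fact an equality $\delta+\delta_\infty=N$), then sum the per-prime Montes costs over $p(t)\mid\dsc(f)$ using the linear constraint $\sum_p d_p\delta_p\le\delta\le N$ together with exactly the H\"older-type estimates you spell out, and finally substitute $\delta,\delta_\infty=O(n^2\cf)$. The paper's bookkeeping is terser---it collapses the sum directly to $O(n^{2+\epsilon}\delta^{1+\epsilon}+n^{1+\epsilon}\delta^{2+\epsilon}\log q+n^{1+\epsilon}\delta^{2+\epsilon})$---but the underlying arithmetic is the same.

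One genuine omission in your accounting: you never bound the cost of \emph{computing} $\dsc(f)$ before factoring it. The paper handles this explicitly, evaluating the Sylvester determinant of $f$ and $f'$ at a cost of $O(d^2n^3)$ field operations with $d=O(n\cf)$, hence $O(n^5\cf^2)$; this is again dominated by $O(n^{5+\epsilon}\cf^{2+\epsilon}\log q)$, so the final bound is unaffected, but the step should be mentioned.
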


\begin{lemma}\label{boundy}
Let $F/k$ be function field of genus $g$ with defining polynomial $f$ of degree $n$. Then, $\delta:=|\dsc(f)|$ and $\delta_\infty:=v_\infty(\dsc(f_\infty))$ satisfy
$$
\delta\leq \delta+\delta_\infty=\cf n(n-1)=O(n^2\cf).
$$
\end{lemma}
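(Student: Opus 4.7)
The plan is to use the explicit substitution linking $f$ and $f_\infty$ and the multiplicative behaviour of the discriminant under rescaling of the variable. The key identity is that, up to relabeling, the roots of $f_\infty(1/t,x)$ in $x$ are precisely $\theta_i/t^{\cf}$ as $\theta_i$ runs over the roots of $f(t,x)$. This is immediate from $f_\infty(1/t,x)=t^{-n\cf}f(t,t^{\cf}x)$: if $f(t,\theta_i)=0$, then substituting $x=\theta_i/t^{\cf}$ gives $f_\infty(1/t,\theta_i/t^{\cf})=t^{-n\cf}f(t,\theta_i)=0$.

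Next I would expand the discriminant as a product over pairs of roots. Writing
\begin{equation*}
\dsc(f_\infty)=\prod_{i<j}\bigl(\theta_i/t^{\cf}-\theta_j/t^{\cf}\bigr)^2=t^{-\cf n(n-1)}\prod_{i<j}(\theta_i-\theta_j)^2=t^{-\cf n(n-1)}\dsc(f),
\end{equation*}
and then applying $v_\infty$, using $v_\infty(t)=-1$ together with $v_\infty(\dsc(f))=-|\dsc(f)|=-\delta$, I obtain
\begin{equation*}
\delta_\infty = v_\infty(\dsc(f_\infty)) = \cf n(n-1) - \delta,
\end{equation*}
which rearranges to the desired equality $\delta+\delta_\infty=\cf n(n-1)$.

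For the inequality $\delta\le\delta+\delta_\infty$, I need $\delta_\infty\ge 0$, which holds because $f_\infty\in A_\infty[x]$ forces $\dsc(f_\infty)\in A_\infty$ and hence $v_\infty(\dsc(f_\infty))\ge 0$. The asymptotic bound $\cf n(n-1)=O(n^2\cf)$ is then trivial. The only point requiring mild care is the first step — that the roots of $f_\infty(1/t,x)$ really are $\theta_i/t^{\cf}$ — but this falls out directly from the definition of $f_\infty$, so there is no serious obstacle.
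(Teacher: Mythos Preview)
Your proof is correct and follows essentially the same route as the paper: both establish the identity $\dsc(f_\infty)=t^{-\cf n(n-1)}\dsc(f)$ and then apply $v_\infty$, together with the observation that $\dsc(f_\infty)\in A_\infty$ forces $\delta_\infty\ge 0$. The only cosmetic difference is that the paper quotes this discriminant transformation from a reference (writing the intermediate factorization $t^{-n\cf(2n-2)}t^{\cf(n^2-n)}$ coming from the resultant formulation), whereas you derive it directly from the root-product expression for the discriminant of a monic polynomial; since both $f$ and $f_\infty$ are monic in $x$, your derivation is perfectly valid.
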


\begin{proof}
The discriminant of $f_\infty$ belongs to $k[1/t]$. Hence, $\delta_\infty=v_\infty(\dsc(f_\infty))\geq 0$ and $\delta\leq \delta+\delta_\infty$. As $f_\infty=t^{-nC_f}f(t,t^\cf x)$, the discriminant of $f_\infty$ satisfies  \cite[p. 13]{EN} $$
\dsc(f_\infty)=t^{-nC_f(2n-2)}t^{\cf(n^2-n)}\dsc(f)=t^{-C_fn(n-1)}\dsc(f).
$$
Therefore $v_\infty(\dsc(f_\infty))=\cf n(n-1)-|\dsc(f)|$.
\end{proof}

\begin{proof}[Proof of theorem \ref{complexi}]
Initially, Algorithm \ref{Algo2} computes $\dsc(f)$ and factorizes it. Since $\dsc(f)=\res(f,f')$, the cost of the computation of $\dsc(f)$ is equal to the cost of computing the determinant of the Sylvester matrix $M$ of $f$ and $f'$. In \cite{MS} it is shown that the cost is $O(d^2n^3)$ field operations, where $d$ denotes the maximal degree of the entries in $M$. Since $d=O(n\cf)$, the computation of $\dsc(f)$ needs at most $O(n^5\cf^2)$ operations in $k$.\\
The factorization of $\dsc(f)$ can be estimated by $O((n^2\cf )^{2+\epsilon}+(n^2\cf )^{1+\epsilon}\log(q))=O( n^{4+\epsilon}\cf^{2+\epsilon}+n^{2+\epsilon}\cf^{1+\epsilon} \log(q))$ operations in $k$, since $\deg \dsc(f)=|\dsc(f)|=O(n^2\cf)$ (c.f. Lemma \ref{boundy}). \\
Considering \cite[theorem 5.14]{BNS} the cost of one call of  \noindent{\tt Montes-algorithm($f$,\,$p(t)$)} and \noindent{\tt Montes-algorithm($f_\infty$,\,$1/t$)}  is equal to 
$$
O((\deg p(t))^{1+\epsilon}(n^{2+\epsilon}+n^{1+\epsilon}\delta_{p(t)} \log(q^{\deg p(t)})+n^{1+\epsilon}\delta_{p(t)}^{2+\epsilon}))
$$
and $O\left(n^{2+\epsilon}+n^{1+\epsilon}\delta_{\infty} \log(q)+n^{1+\epsilon}\delta_{\infty}^{2+\epsilon}\right)$
operations in $k$, respectively. The worst case is that we have to call the Montes-algorithm for all prime divisors $p(t)$ of $\dsc(f)$. Therefore the cost of the for-loop is
\begin{align*}
\sum_{p(t)|\dsc(f)}&O((\deg p(t))^{1+\epsilon}(n^{2+\epsilon}+n^{1+\epsilon}\delta_{p(t)} \log(q^{\deg p(t))})+n^{1+\epsilon}\delta_{p(t)}^{2+\epsilon}))\\
=\ &O(n^{2+\epsilon}\delta^{1+\epsilon}+n^{1+\epsilon}\delta^{2+\epsilon}\log(q)+n^{1+\epsilon}\delta^{2+\epsilon})
\end{align*}
operations in $k$. Adding the cost for applying \noindent{\tt Montes-algorithm($f_\infty$,\,$1/t$)}, we obtain for the computation of the degree of the two indices
\begin{align*}
&O(n^{2+\epsilon}\delta^{1+\epsilon}+n^{1+\epsilon}(\delta^{2+\epsilon}+\delta_\infty)\log(q)+n^{1+\epsilon}(\delta^{2+\epsilon}+\delta_\infty^{2+\epsilon}))\\
=&O(n^{4+\epsilon}\cf^{1+\epsilon}+n^{5+\epsilon}\cf^{2+\epsilon}\log(q)+n^{5+\epsilon}\cf^{2+\epsilon})\\
=&O(n^{5+\epsilon}\cf^{2+\epsilon}\log(q)),
\end{align*}
field operations, where the second equality follows from Lemma \ref{boundy}. Clearly, the cost of the computation and  factorization of $\dsc(f)$ is dominated by\linebreak $O(n^{5+\epsilon}\cf^{2+\epsilon}\log(q))$.
\end{proof}

\begin{remark}
In \cite[Lemma 1.29]{JDB} is shown that for $g\geq 1$ the defining polynomial $f$ of a global function field $F/k$ can be chosen such that $\cf=O(g^2)$ holds. In that case Algorithm \ref{Algo2} needs at most $O(n^{5+\epsilon}g^{4+\epsilon}\log(q))$ operations in $k$ to determine the genus of $F$.
\end{remark}

\section{Experimental results}

We have implemented Algorithm \ref{Algo2} in Magma \cite{BCP}. The code may be downloaded from  \href{http://montesproject.blogspot.com}{http://montesproject.blogspot.com}. In this section we compare the runtime of our algorithm with that of the algorithm of Magma. 
The computations have been done in a Linux server, with two Intel Quad Core processors, running at 3.0 Ghz, with 32 Gb of RAM memory. Times are expressed in seconds. If an algorithm did not terminate after $24$ hours we write $"-"$ instead. For each example we present the characteristic data of the function field $F/k$ and its defining polynomial $f$ and the time, which needed Algorithm \ref{Algo2} and that of Magma to determine the genus. Additionally, we give the number of seconds of the initial computation (I.C.) in Algorithm \ref{Algo2}; that is, the time that costs the computation of $\dsc(f)$ and its factorization. We will see that in most of the cases the runtime of the initial computation dominates the runtime of Algorithm \ref{Algo2}. In the column Algo \ref{Algo2} we display the total running time of Algorithm \ref{Algo2}, including the initial computation.
 \medskip\\
For the first examples we use families of global function fields, which cover all the computational difficulties of the Montes algorithm \cite{GNP}. Later, we use randomly chosen global function fields.
 \medskip\\

We consider in all examples the function field $F/k$ of genus $g$, with defining polynomial $f(t,x)\in k[t,x]$.
\subsubsection*{Example 1}
Let $f=(x+p(t)^r+\dots+1)^n+p(t)^k\in \mathbb{F}_{37}[t,x]$, where $p(t)\in A$ is irreducible and $k,r$ are non-negative integers.\\

\begin{tabular}{|c|c|c|c|c||c|c||c|c|c|}\hline $g$&$p(t)$&$n$&$k$&$r$& $\delta$&$\delta_\infty$& I.C.&Algo \ref{Algo2} & Magma \\\hline 
$0$&$t$& $5$&$7$&$10$ & $28$&$172$&0.0 &0.02  & 0.39   \\\hline
$22$&$t^3+2$& $23$&$30$&$10$ & $1980$&$13200$&8.08 &8.31  & 66289.34   \\\hline
$0$&$t+1$& $77$&$163$&$20$ & $12388$&$104625$&265.4&267.91  & $-$   \\\hline
 \end{tabular}

\subsubsection*{Example 2}
Let $f=(\prod_{\alpha\in\mathbb{F}_3}(x+t\alpha)^m+tp(t)^k)^m+tp(t)^{3mk}\in\mathbb{F}_{3}[t,x]$, where $p(t)=t^2+1$ and $m,k$ are  non-negative integers.\\

\begin{tabular}{|c|c|c|c||c|c||c|c|c|}\hline $g$&$\deg(f)$&$k$&$m$& $\delta$&$\delta_\infty$&I.C.&Algo \ref{Algo2} & Magma \\\hline 
$50$& $12$&$2$ &$2$& $264$&$132$&0.01  &0.05& 0.82  \\\hline
$528$& $48$&$5$ &$4$ & $5640$&$1128$&1.13  &3.96 & 1322.08 \\\hline
$1136$& $75$&$7$ &$5$ & $15510$&$1140$&9.0 &37.9  & 15961.82  \\\hline
$1198$& $147$&$1$ &$7$ & $7854$&$13608$&2.60 &80.3  & $-$   \\\hline
 \end{tabular}

\subsubsection*{Example 3}
Let $f=(x^2-2x+4)^3+p^k\in\mathbb{F}_{q}[t,x]$, where $p(t)\in A$ is a prime polynomial and $k$ a non-negative integer.\\

\begin{tabular}{|c|c|c|c||c|c||c|c|c|}\hline $g$&$q$&$p(t)$&$k$& $\delta$&$\delta_\infty$&I.C.&Algo \ref{Algo2} & Magma \\\hline 
$0$&$7$& $t+2$&$7$ & $35$&$25$&0.00  &0.01 & 0.04  \\\hline
$60$&$7$& $t+2$&$122$ & $610$&$20$&0.02  &0.03 & 0.36  \\\hline
$450$&$101$& $t+1$&$901$ & $4505$&$25$&1.51  &1.57 & 15.76  \\\hline
$3512$&$73$& $t^2+1$&$3511$ & $35510$&$20$&186.56  &189.1 & 1238.77   \\\hline

 \end{tabular}

\subsubsection*{Example 4}
Let $f=((x^6+4p(t)x^3+3p(t)^2x^2+4p(t)^2)^2+p(t)^6)^3+p(t)^k\in \mathbb{F}_{q}[t,x]$ of degree $36$, where $p(t)\in A$ is an irreducible polynomial and $k$ a non-negative integer.\\

\begin{tabular}{|c|c|c|c||c|c||c|c|c|}\hline $g$&$q$&$p(t)$&$k$& $\delta$&$\delta_\infty$&I.C. &Algo \ref{Algo2} & Magma \\\hline 
$85$&$13$& $t^2+1$&$11$ & $924$&$336$&0.05  &0.19 & 122.6  \\\hline
$519$&$101$& $t+17$&$112$ & $3920$&$1120$&0.83  &4.59 & 1052.82 \\\hline
$3379$&$53$& $t^2+2$&$323$  & $22610$&$70$&23.8  &238.5 & 4617.74  \\\hline

 \end{tabular}

\subsubsection*{Example 5}
Let $f=(x^{l-1}+\dots+x +1)^m+t^k\in\mathbb{F}_{q}[t,x]$, where $m,l,k$ are non-negative integers.\\

\begin{tabular}{|c|c|c|c|c|c||c|c||c|c|c|}\hline $g$&$q$&$\deg f$&$m$&$l$&$k$& $\delta$&$\delta_\infty$&I.C.&Algo \ref{Algo2} & Magma \\\hline 
$6$&$101$& $8$&$4$&$3$&$13$ & $91$&$21$&0.00  &0.01 & 0.10  \\\hline
$0$&$13$& $42$&$7$&$7$&$13$  & $533$&$1189$&0.01  &0.17 & 292.9  \\\hline
$2$&$3$& $260$&$13$&$21$&$2$  & $518$&$66822$&0.02  &0.82 & $-$   \\\hline
$36$&$13$& $420$&$21$&$21$&$5$  & $2095$& 173885 &1.45  &3.81 & $-$  \\\hline

 \end{tabular}

\subsubsection*{Example 6}
For $1\leq l\leq 6$ we take the family of polynomials $f_l\in \F_{13}[t,x]$ with: \\

\begin{tabular}{|l|}
\hline
$f_{ 1}(t,x)=x^2+t$\\
\hline
$f_{ 2}(t,x)=f_{1}(x)^{2}+(t-1)t^{3}x$\\
\hline
$f_{ 3}(t,x)=f_{2}(x)^{3}+t^{11}$\\
\hline
$f_{ 4}(t,x)=f_{3}(x)^{3}+t^{29}x f_{2}(x)$\\
\hline
$f_{ 5}(t,x)=f_{4}(x)^{2}+(t-1)t^{42}xf_{1}(x)f_{3}(x)^2$ \\
\hline
$f_{ 6}(t,x)=f_{5}(x)^{2}+t^{88}xf_{3}(x)f_{4}(x)$\\
\hline

\end{tabular}\\\medskip

\begin{tabular}{|c|c|c||c|c||c|c|c|}\hline $l$&$g$&$\deg f_l$& $\delta$&$\delta_\infty$&I.C.&Algo \ref{Algo2} & Magma \\\hline 
$1$&$0$& $2$& $1$&$1$&0.0  &0.0 & 0.0  \\\hline
$2$&$3$& $4$& $16$&$8$&0.0  &0.01 & 0.01  \\\hline
$3$&$9$& $12$& $136$&$128$&0.01  &0.02 & 1.66  \\\hline
$4$&$40$& $36$& $1223$&$1297$&0.09  &0.12 & 707.06   \\\hline
$5$&$133$& $72$& $4964$&$4671$&1.37  &1.61 &60125.24  \\\hline
$6$&$329$& $144$& $19618$&$21566$&22.06  &24.67 & $-$  \\\hline

 \end{tabular}

\subsubsection*{Example 7}
We consider the function field $F/\mathbb{F}_{q}$ of genus $g=140$ with the defining polynomial $x^{41} -(t^2+1)(x^2-1)- (t^8+2t^6 +1)x$.\\

\begin{tabular}{|c|c|c||c|c|c|}\hline $q$&$\delta$&$\delta_\infty$&I.C.&Algo \ref{Algo2} & Magma \\\hline 
3&328&1312& 0.01  & 0.05 & 64.23 \\\hline
97&328&1312& 0.03  & 0.03 & 169.02 \\\hline 
10007&328&1312& 0.08  & 0.10 & 171.98 \\\hline 
 \end{tabular}

\subsubsection*{Example 8}
We consider the function field $F/\mathbb{F}_{q}$ of genus $g=213$ with the defining polynomial $x^{62}+(t+1)x^{12}+t^8+1$.\\

\begin{tabular}{|c|c|c||c|c|c|}\hline $q$& $\delta$&$\delta_\infty$&I.C.&Algo \ref{Algo2} & Magma \\\hline 
 7& 488&3294&0.04 &58.01 & 765.12 \\\hline
 113& 488&3294&0.05 &148.89 & 2011.94 \\\hline
 1013& 488&3294&0.04 &163.41 & 2017.90 \\\hline
  \end{tabular}

\subsubsection*{Example 9}
We consider the function field $F/\mathbb{F}_{q}$ of genus $g=325$ with the defining polynomial $x^{94}+(t+1)x^{12}+t^8+1$.\\

\begin{tabular}{|c|c|c||c|c|c|}\hline  $q$&$\delta$&$\delta_\infty$&I.C.&Algo \ref{Algo2} & Magma \\\hline 
 7&744&7998&0.06  & 152.90 & 5990.76  \\\hline
 103&744&7998&0.07  & 601.51 & 23528.40  \\\hline
  1009&744&7998& 0.10  &  2707.93 &    24305.16 \\\hline 
  \end{tabular}

\subsubsection*{Example 10}
We consider the function field $F/\mathbb{F}_{q}$ of genus $g$ with the defining polynomial $ x^{40}+(t+1)x^{23}+t^9x+(t+1)x^{13}+(t^5-3t^2)x^7+t^{62}x^3+t+1$.\\

\begin{tabular}{|c|c|c|c||c|c|c|}\hline  $g$&$q$&$\delta$&$\delta_\infty$&I.C.&Algo \ref{Algo2} & Magma \\\hline 
1220&5&2482&638&1.07  &1.13 & 92.5  \\\hline 
1220&125&2482&638&1.05  & 1.07 & 98.02  \\\hline 
1221&3137&2482&638&15.79  & 15.81 & 212.37  \\\hline 
\end{tabular}

\subsubsection*{Example 11}
We consider the function field $F/\mathbb{F}_{q}$ of genus $g$ with the defining polynomial $x^{68}+(t+1)^4x^{23}+(t^3+5)^9x+(t+1)x^{13}+(t^5-3t^2)x^7+t^{62}x^3+t+1$.\\

\begin{tabular}{|c|c|c|c||c|c|c|}\hline $g$&$q$& $\delta$&$\delta_\infty$&I.C.&Algo \ref{Algo2} & Magma \\\hline 
2082&5&1836&2720& 0.59  & 0.62 & 514.14  \\\hline 
2082&125&1836&2720& 0.64  & 0.67 & 598.40 \\\hline 
2083&3137&4235&321& 45.44 & 45.48 & 1662.31  \\\hline 
\end{tabular}

\subsubsection*{Example 12}
We consider the function field $F/\mathbb{F}_{q}$ of genus $g=3669$ with the defining polynomial $x^{120}+(t+1)^4x^{23}+(t^3+5)^9x+(t+1)x^{13}+(t^5-3*t^2)x^7+t^{62}x^3+t+1$.\\

\begin{tabular}{|c|c|c||c|c|c|}\hline $q$& $\delta$&$\delta_\infty$& I.C.&Algo \ref{Algo2} & Magma \\\hline 
5& 7459&6821& 16.56  & 16.59 & 15415.75  \\\hline 
97& 7459&6821& 70.23  & 70.28 & 21610.17  \\\hline 
529& 7459&6821& 182.88  & 182.99 & 16172.12  \\\hline 

\end{tabular}

\subsubsection*{Example 13}
We consider the function field $F/\mathbb{F}_{q}$ of genus $g=15154$ with the defining polynomial $x^{4330} -(t^2+1)(x^2-1)- (t^8+2t^6 +1)x$.\\

\begin{minipage}{\linewidth}

\renewcommand{\footnoterule}{}
\renewcommand{\thefootnote}{\alph{footnote}}

\begin{tabular}{|c|c|c||c|c|c|}
\hline $q$&$\delta$&$\delta_\infty$&I.C.&Algo \ref{Algo2} & Magma \\\hline 
3& 8674&$18744570$&85.99  & 95.82 & $-$\footnotemark[1] \\\hline 
37& 8674&$18744570$&760.92  & 802.87 & $-$\footnotemark[1] \\\hline 
\end{tabular}
\footnotetext[1]{All virtual memory has been exhausted, so Magma cannot perform this statement.}

\end{minipage}

\end{document}